\newtheorem{theorem}{Theorem}[section]
\newtheorem{proof of lemma}[theorem]{Proof of Lemma}
\newtheorem{proposition}[theorem]{Proposition}
\newtheorem{corollary}[theorem]{Corollary}
\theoremstyle{definition}
\newtheorem{definition}[theorem]{Definition}
\newtheorem{remark}[theorem]{Remark}
\numberwithin{equation}{section}
\begin{document}

\title[Uniqueness pairs for the hyperbola]
{Heisenberg uniqueness pairs for the hyperbola}

\author{Deb Kumar Giri and Rama Rawat}

\address{Deb Kumar Giri, Department of Mathematics, Indian Institute of Science, Bangalore-560012, India.}
\email{debkumarg@iisc.ac.in}


\address{Rama Rawat, Department of Mathematics, Indian Institute of Technology, Kanpur-208016, India.}
\email{rrawat@iitk.ac.in}

\subjclass[2010]{Primary 42A10, 42B10; Secondary 37A45}

\date{\today}

\keywords{Ergodic theory, Fourier transform, Perron-Frobenius operator, Uncertainty principle.}

\begin{abstract}
Let $\Gamma$ be the hyperbola $\{(x,y)\in\mathbb R^2 : xy=1\}$ and $\Lambda_\beta$ 
be the lattice-cross defined by $\Lambda_\beta=\left(\mathbb Z\times\{0\}\right)\cup\left(\{0\}\times\beta\mathbb Z\right)$ in $\mathbb R^2,$ where $\beta$ is a positive real. A result of Hedenmalm and Montes-Rodr\'iguez 
says that $\left(\Gamma,\Lambda_\beta\right)$ is a Heisenberg uniqueness pair if and 
only if $\beta\leq1.$ In this paper, we show that for a rational perturbation of $\Lambda_\beta,$  
namely 
\[\Lambda_\beta^\theta=\left((\mathbb Z+\{\theta\})\times\{0\}\right)\cup\left(\{0\}\times\beta\mathbb Z\right),\]
where $\theta=1/{p},~\text{for some}~{p}\in\mathbb N$ and $\beta$ is a positive real,   
the pair $\left(\Gamma,\Lambda_\beta^\theta\right)$ is a Heisenberg uniqueness pair if 
and only if $\beta\leq{p}.$
\end{abstract}

\maketitle

\section{Introduction}\label{section1}
The notion of Heisenberg uniqueness pair has been around for some time now. 
It was first introduced in \cite{HR} by Hedenmalm and Montes-Rodr\'iguez as 
a variant of the uncertainty principle for Fourier transform, which says that a 
nonzero function and its Fourier transform cannot be simultaneously too small   
(for more on uncertainty principle see \cite{B},\cite{HJ},\cite{Hei}). In \cite{HR}, 
authors had proposed the following.  
Let $\Gamma$ be a finite disjoint union of smooth curves in $\mathbb R^2$ and $\Lambda$ 
be any subset of $\mathbb R^2.$ Let $X(\Gamma)$ be the space of all finite complex-valued 
Borel measures $\mu$ in $\mathbb R^2$ which are supported on $\Gamma$ and are absolutely continuous 
with respect to the arc length measure on $\Gamma.$ For $(\xi,\eta)\in\mathbb R^2,$ 
the Fourier transform of $\mu$ is defined by
\begin{equation}\label{eq01}
\hat\mu{(\xi,\eta)}=\int_\Gamma e^{\pi i(x\xi+ y\eta)}d\mu(x,y).
\end{equation}

\begin{definition}\label{def1}
A pair $\left(\Gamma, \Lambda\right)$ is said to be a Heisenberg uniqueness 
pair (HUP), if the only $\mu\in X(\Gamma)$ satisfying $\hat\mu{(\xi,\eta)}=0$ 
for all $(\xi,\eta)\in\Lambda,$ is the zero measure.
\end{definition}
\smallskip

Thus if $\left(\Gamma, \Lambda\right)$ is a HUP, then the set $\Lambda$ \textit{determines}  
$\mu\in X(\Gamma).$ In interesting examples of HUP, the set $\Lambda,$ for a given 
$\Gamma,$ is taken to be a \textit{very "small"} subset of Euclidean plane. This constraint 
on $\Lambda$ (for a given $\Gamma$) seems to make Heisenberg uniqueness pairs to be 
on the other extreme of uncertainty principles and makes investigating them a challenging problem.  
\smallskip

The expression (\ref{eq01}) of Fourier transform is not the standard one, but since 
the problem we take up here is inspired by and closely follows the methods of \cite{HR}, 
we will follow definitions and notation from \cite{HR} as far as possible. 
Moreover, the definition of Heisenberg uniqueness pair can be and in fact has been 
extended to more general settings as in the work (\cite{Gon,JK,L,S1,S2,Sri}), 
where several examples of Heisenberg uniqueness pairs have been obtained.

\smallskip

\textit{Invariance properties.} The properties of Fourier transform together 
with change of variables imply the following invariance properties for the Heisenberg 
uniqueness pair: 
\smallskip

\begin{enumerate}[(i)]
\item Let $u, v\in\mathbb R^2.$ The pair $(\Gamma,\Lambda)$
is a HUP if and only if the pair $(\Gamma+ \{u\},\Lambda+\{v\})$ is a HUP.

\smallskip

\item Let $T : \mathbb R^2\rightarrow \mathbb R^2$ be an invertible
linear transform and $T^\ast$ be its adjoint. The pair $(\Gamma,\Lambda)$
is a HUP if and only if the pair $\left(T^{-1}\Gamma,T^\ast\Lambda\right)$  is a HUP.
\end{enumerate}
\smallskip

{\bf\textit{Dual condition for HUP.}} For $\zeta\equiv(\xi,\eta)\in\Lambda,$ define 
a function $e_\zeta$ on $\Gamma$ by $e_\zeta(x,y)=e^{\pi i(x\xi+y\eta)}.$ Then the 
pair $(\Gamma,\Lambda)$ is a HUP if and only if the set $\{e_{\zeta}:~\zeta\in\Lambda\}$ 
spans a weak-star dense subspace of $L^{\infty}(\Gamma).$

\smallskip

Our focus on this paper is on a result from \cite{HR}, where authors have studied 
the pair (hyperbola, some lattice-cross) and have proved the following result. 
 
\begin{theorem}\cite{HR}\label{th2}
Let $\Gamma=\{(x,y)\in\mathbb R^2 : xy=1\}$ be the hyperbola and $\Lambda_\beta$ be the 
lattice-cross 
$\Lambda_{\beta}=\left(\mathbb Z\times\{0\}\right)\cup\left(\{0\}\times\beta\mathbb Z\right),$
where $\beta$ is a positive real. Then $\left(\Gamma,\Lambda_\beta\right)$ is a Heisenberg 
uniqueness pair if and only if $\beta\leq1$.
\end{theorem}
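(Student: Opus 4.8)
The plan is to reduce the problem to two periodization (``folding'') conditions on a single density and then to recognize the coupling between them as the action of the transfer (Perron--Frobenius) operator of a Gauss-type map, whose ergodic properties decide uniqueness.

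First I would parametrize the hyperbola by $x\in\mathbb R\setminus\{0\}$ through $(x,1/x)$, so that a measure $\mu\in X(\Gamma)$ corresponds to a density $f\in L^1(\mathbb R\setminus\{0\},dx)$, the arc-length Jacobian being absorbed into $f$. In this parametrization $\hat\mu(\xi,\eta)=\int e^{\pi i(x\xi+\eta/x)}f(x)\,dx$, and testing against $\Lambda_\beta$ splits into two families. The points $(n,0)$, $n\in\mathbb Z$, give $\int e^{\pi i n x}f(x)\,dx=0$ for all $n$, which, since $x\mapsto e^{\pi i n x}$ has period $2$ and $\{e^{\pi i n x}\}_{n\in\mathbb Z}$ is complete in $L^2(0,2)$, is equivalent to the vanishing of the $2$-periodization $\sum_{k\in\mathbb Z}f(x+2k)=0$ a.e. The points $(0,\beta n)$, after the substitution $u=1/x$ and with $h(u):=u^{-2}f(1/u)$, give $\int e^{\pi i\beta n u}h(u)\,du=0$, equivalent to the vanishing of the $(2/\beta)$-periodization of $h$. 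Thus $(\Gamma,\Lambda_\beta)$ is a HUP precisely when the only $f$ whose $2$-fold and whose reciprocal's $(2/\beta)$-fold both vanish is $f=0$.

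Next, using the linear-invariance property (ii) with the diagonal map $T=\mathrm{diag}(\sqrt\beta,1/\sqrt\beta)$, which preserves $\Gamma$, I would symmetrize the two periods, reducing to the single parameter $\alpha=\sqrt\beta$ with both foldings taken at half-period $1/\alpha$ (so that $\beta\le1\iff\alpha\le1$). The reciprocal map $x\mapsto1/x$ interchanges the roles of $f$ and $h$, and composing it with the folding into a fundamental domain produces a Gauss-type interval map $R_\alpha$ (for $\alpha=1$, essentially a rescaled Gauss map $x\mapsto 1/x\bmod 2$). The two vanishing conditions then say exactly that $f$ is annihilated by, equivalently is a fixed density of, the associated transfer operator $\mathcal L_\alpha$ (the Perron--Frobenius operator of $R_\alpha$), subject to an extra mean-zero constraint coming from combining the two branches of $\Gamma$.

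The decisive step, and the main obstacle, is the spectral and ergodic analysis of $\mathcal L_\alpha$ on a suitable space where $L^1$ densities live. For $\alpha\le1$ the map $R_\alpha$ is ergodic with respect to its Gauss-type invariant measure, so $\mathcal L_\alpha$ has a one-dimensional fixed space spanned by the invariant density; the two-branch mean-zero condition is orthogonal to that density and therefore forces $f=0$, showing $(\Gamma,\Lambda_\beta)$ is a HUP. This is where the real work lies: establishing the ergodicity (equivalently, in the dual formulation, that $\{e^{\pi i n x}\}\cup\{e^{\pi i\beta n/x}\}$ spans a weak-star dense subspace of $L^\infty(\Gamma)$) and controlling the transfer operator. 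For $\alpha>1$, i.e. $\beta>1$, the folding period is too short, $R_\alpha$ acquires additional invariant densities, and I would produce an explicit nonzero $f$ satisfying both periodization conditions, hence a nonzero $\mu\in X(\Gamma)$ annihilated on $\Lambda_\beta$, so that the pair fails to be a HUP. Together the two regimes yield the sharp threshold $\beta\le1$.
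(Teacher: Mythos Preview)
Your overall architecture---reduce to two periodization conditions on an $L^1$ density, recast them via a Gauss-type interval map and its Perron--Frobenius operator---is exactly the framework of \cite{HR} that the paper adopts. But two of your key steps do not go through as written.

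\textbf{Necessity ($\beta>1$).} The paper (following \cite{HR}) does \emph{not} produce the counterexample by finding ``additional invariant densities'' of the transfer operator. Instead one extends the functions $e^{\pi i n x}$ and $e^{\pi i \beta n/x}$ to bounded harmonic functions on the upper half-plane $\mathbb C_+$ and solves for $z_1\neq z_2\in\mathbb C_+$ at which all of them agree; explicitly $z_{1,2}=\pm1+i\sqrt{\beta-1}$, which exist in $\mathbb C_+$ precisely when $\beta>1$. The nonzero annihilator is then the difference of Poisson kernels $P_{z_1}-P_{z_2}\in L^1(\mathbb R)$. Your proposal leaves the construction of ``an explicit nonzero $f$'' unspecified, and the dynamical route you suggest is not the one that yields it.

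\textbf{Sufficiency ($\beta\le1$).} Your claim that $\mathcal L_\alpha$ has a one-dimensional fixed space in $L^1$, to be killed by a ``mean-zero constraint,'' is incorrect: the Gauss-type invariant density is $d\omega(x)=\dfrac{dx}{1-x^2}$ on $(-1,1]$, which has \emph{infinite} mass and does not lie in $L^1$. The correct mechanism is that there is \emph{no} nonzero absolutely continuous finite $(U,\lambda)$-invariant measure for any $|\lambda|=1$; this is what Proposition~\ref{prop1} (via the Birkhoff ergodic theorem for the infinite measure $\omega$) establishes, and it is what forces $f=0$ without any auxiliary constraint. Note also that one must rule out both eigenvalues $\pm1$, since the relevant operator factors as $(\mathrm I+\mathcal C_\beta)(\mathrm I-\mathcal C_\beta)$; your sketch addresses only the fixed-point case. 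Finally, for $0<\beta<1$ the paper uses a separate iteration argument (the support of any candidate $\nu$ is pushed into the null set $E_\beta(\infty)$), not the ergodicity argument; the ergodic step is only needed at the endpoint $\beta=1$.
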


By duality, Theorem \ref{th2} is equivalent to the following density result.  
 
\smallskip

\begin{theorem}\cite{HR}
The space of all linear span of the functions 
$\{e_n(x)=e^{\pi inx};~n\in\mathbb Z\}\cup\{e^{\langle\beta\rangle}_n(x)
=e^{\pi in\beta/x};~n\in\mathbb Z\}$ is weak-star dense in $L^\infty(\mathbb R)$ 
if and only if $0<\beta\leq{1}.$
\end{theorem}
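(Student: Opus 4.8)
The plan is to deduce the statement from Theorem~\ref{th2} by making the indicated duality precise, and then to describe how the same statement would be proved directly — which is where essentially all of the work sits. Parametrise the hyperbola by $t\mapsto(t,1/t)$, $t\in\mathbb R\setminus\{0\}$; this covers both branches bijectively and the arc-length density $\sqrt{1+t^{-4}}$ is positive and locally bounded above and below, so, absorbing it, a measure $\mu\in X(\Gamma)$ is the same thing as a function $f\in L^1(\mathbb R)$ with $d\mu=f(t)\,dt$. With the pairing $\langle f,\psi\rangle=\int_{\mathbb R}f\psi$ one has $L^\infty(\mathbb R)=(L^1(\mathbb R))^\ast$, so by the Hahn--Banach theorem a subspace $V\subset L^\infty(\mathbb R)$ is weak-star dense precisely when its annihilator $V^{\perp}=\{f\in L^1(\mathbb R):\langle f,\psi\rangle=0\ \text{for all}\ \psi\in V\}$ is trivial. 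Taking $V$ to be the linear span of $\{e_n\}_{n\in\mathbb Z}\cup\{e_n^{\langle\beta\rangle}\}_{n\in\mathbb Z}$, the parametrisation gives
\[
\hat\mu(n,0)=\int_{\mathbb R}e^{\pi i n t}f(t)\,dt=\langle f,e_n\rangle,\qquad
\hat\mu(0,n\beta)=\int_{\mathbb R}e^{\pi i n\beta/t}f(t)\,dt=\langle f,e_n^{\langle\beta\rangle}\rangle ,
\]
so $\hat\mu\equiv0$ on $\Lambda_\beta$ exactly when $f\in V^{\perp}$. Hence $V$ is weak-star dense in $L^\infty(\mathbb R)$ if and only if $(\Gamma,\Lambda_\beta)$ is a HUP, and Theorem~\ref{th2} yields the asserted dichotomy $0<\beta\le1$.

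For a self-contained proof one argues on $V^{\perp}$ directly. The condition $\langle f,e_n\rangle=0$ for every $n\in\mathbb Z$ says exactly that the $2$-periodisation $\Pi f(t):=\sum_{k\in\mathbb Z}f(t+2k)$ vanishes a.e.; in particular $\int_{\mathbb R}f=\int_0^2\Pi f=0$. For the second family, substitute $t\mapsto\beta/t$, an involution of each half-line with Jacobian $\beta/t^2$: with $\tilde f(t):=\frac{\beta}{t^2}f(\beta/t)\in L^1(\mathbb R)$ the condition $\langle f,e_n^{\langle\beta\rangle}\rangle=0$ for all $n$ becomes $\Pi\tilde f=0$ a.e. So the entire content is: for $f\in L^1(\mathbb R)$, the conditions $\Pi f=0$ and $\Pi\tilde f=0$ force $f=0$ if and only if $\beta\le1$.

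Folding the two branches (or keeping the half-lines but retaining the coupling between them that the periodisations impose), these two identities identify $f$ with a fixed point of the Perron--Frobenius (transfer) operator of a Gauss-type interval map $\tau_\beta$ obtained from $t\mapsto\beta/t$ followed by reduction modulo $2$ — for $\beta=1$ this $\tau_\beta$ is conjugate to the classical Gauss map $x\mapsto\{1/x\}$. For $\beta\le1$ one shows $\tau_\beta$ is ergodic with respect to an absolutely continuous invariant probability measure whose density is strictly positive and integrable; hence the space of $\tau_\beta$-invariant $L^1$ densities is one-dimensional, spanned by that density, and the mean-zero constraint carried by $\Pi f=0$ then forces $f=0$. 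For $\beta>1$ the map $\tau_\beta$ is no longer ergodic — it admits a proper invariant set of positive Lebesgue measure — and one writes down an explicit nonzero $f\in L^1(\mathbb R)$ supported there with $\Pi f=\Pi\tilde f=0$, so $V^{\perp}\ne\{0\}$. The main obstacle is precisely this ergodic-theoretic dichotomy for the family $\tau_\beta$: establishing ergodicity (and the attendant one-dimensionality of invariant densities) when $\beta\le1$, and exhibiting the annihilating measure when $\beta>1$. Relative to that, the duality of the first paragraph is only bookkeeping.
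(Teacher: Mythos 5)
Your first paragraph is correct and is essentially all the paper itself does for this statement: the theorem is quoted from \cite{HR}, and the paper's only content here is the duality with Theorem \ref{th2}, which you make precise via Hahn--Banach exactly as intended. The genuine gap is in your sketch of the direct proof, which is where you yourself locate all the work. Your central claim --- that for $\beta\le 1$ the Gauss-type map admits an absolutely continuous invariant \emph{probability} measure with integrable, strictly positive density, so that the space of invariant $L^1$ densities is one-dimensional --- is false, and not as a technicality but at the crux. For $\beta=1$ the absolutely continuous invariant measure is $d\omega(x)=dx/(1-x^2)$ on $(-1,1]$, which has \emph{infinite} mass; its density is not in $L^1$. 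The fact the argument actually needs is the opposite of what you assert: there is \emph{no} nonzero finite absolutely continuous $(U,\lambda)$-invariant measure for any unimodular $\lambda$ (Proposition \ref{prop1} here, Proposition 2.4 of \cite{HR}), i.e.\ the space of invariant $L^1$ densities is zero-dimensional; this is deduced from ergodicity with respect to the \emph{infinite} measure $\omega$, and it is precisely what makes $\pm1$ fail to be eigenvalues of the Perron--Frobenius operator $\mathcal P_\beta$ on $L^1$, hence makes both factors in $\mathrm{I}-\mathrm{T}_\beta \mathrm{S}=(\mathrm{I}+\mathcal C_\beta)(\mathrm{I}-\mathcal C_\beta)$ have weak-star dense range. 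If your one-dimensionality claim were true, $1$ would be an eigenvalue of $\mathcal P_\beta$ and the range of $\mathrm{I}-\mathcal C_\beta$ would not be dense; the proposed repair by the mean-zero constraint does not address this, and in any case says nothing about the eigenvalue $-1$ (or other unimodular eigenvalues), which the factorization forces you to exclude as well.

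The necessity direction is also not established. You assert that for $\beta>1$ the map fails to be ergodic and that ``one writes down an explicit nonzero $f$'' in the annihilator, but no such $f$ is produced, and producing one is genuinely difficult --- the structure of the annihilator for $\beta>1$ is the subject of the separate paper \cite{MHR}. The route actually taken in \cite{HR}, and adapted in this paper, is entirely different and much softer: the functions $e_n$ and $e_n^{\langle\beta\rangle}$ extend to bounded harmonic functions on the upper half-plane $\mathbb C_+$, point evaluation of the harmonic extension is weak-star continuous on $L^\infty(\mathbb R)$, and for $\beta>1$ one exhibits two distinct points $z_1,z_2\in\mathbb C_+$ at which all of these functions agree, so the span cannot separate points of $\mathbb C_+$ and hence cannot be weak-star dense. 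As written, neither direction of the dichotomy is proved by your sketch.
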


Many examples of Heisenberg uniqueness pair have been obtained in the setting of the plane as 
well as in the higher dimensional Euclidean spaces. For more details see (\cite{Ba,Bag,CGGS,GR,GR2,Gon,GJ,HR2,HR3,JK,L,S1,S2,Sri}).
\smallskip

Next, we state the main result of this paper which is a variant of Theorem \ref{th2} and  
the concerned problem finds a mention in (section 7, open problem $(a),$ \cite{HR}). 

\begin{theorem}\label{th3}
Let $\Gamma=\{(x,y)\in\mathbb R^2 : xy=1\}$ be the hyperbola and $\Lambda_\beta^\theta$ be 
the lattice-cross 
\begin{equation}\label{eq100}
\Lambda_\beta^\theta=\left((\mathbb Z+\{\theta\})\times\{0\}\right)\cup\left(\{0\}\times\beta\mathbb Z\right),
\end{equation}
where $\theta=1/{p},~\text{for some}~{p}\in\mathbb N$ and $\beta$ is a positive real.  
Then $\left(\Gamma,\Lambda_\beta^\theta\right)$ is a Heisenberg uniqueness pair if 
and only if $\beta\leq{p}.$
\end{theorem}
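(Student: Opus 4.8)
The plan is to follow the strategy of Hedenmalm and Montes-Rodr\'iguez, reducing the problem to a dynamical question about an associated Gauss-type map, but keeping careful track of the phase shift $\theta=1/p$. First I would use the invariance properties and parametrize $\Gamma$ by $x\mapsto(x,1/x)$, writing a measure $\mu\in X(\Gamma)$ as $f(x)\,dx$ on the two branches $x>0$ and $x<0$; the vanishing conditions $\hat\mu=0$ on $(\mathbb Z+\{1/p\})\times\{0\}$ and on $\{0\}\times\beta\mathbb Z$ translate, via Fourier series and after the substitution splitting the two branches, into: a periodicity-type constraint forcing a function built from $f$ to be supported compatibly with period considerations, and an invariance condition $f(x)+\text{(transform of }f)=0$ under the map $x\mapsto$ (something like) $-1/x$ composed with translation by the $\beta$-lattice. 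Concretely, I expect the weak-star density reformulation (the dual condition in the excerpt) to be the cleanest route: one must show that $\{e^{\pi i(n+1/p)x}\}_{n\in\mathbb Z}\cup\{e^{\pi i n\beta/x}\}_{n\in\mathbb Z}$ is weak-star dense in $L^\infty(\mathbb R)$ exactly when $\beta\le p$.

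The key steps, in order: (1) Rescale using invariance property (ii) with a diagonal linear map to normalize, turning the two arithmetic progressions into $\mathbb Z+\{1/p\}$ on one axis and $\beta'\mathbb Z$ on the other, and reduce to functions on the interval picture. (2) Decompose the orthogonality relations on each branch into a statement that a certain $2\pi$-periodic (or period-$2$) function $F$ assembled from $f$ must be annihilated by all exponentials $e^{\pi i n\beta/x}$ after the change of variable $t=\beta/x$; the crucial new feature is that the $\theta=1/p$ shift means $f$ is not itself periodic but $f(x)e^{-\pi i x/p}$-type combinations enter, so one should pass to a $p$-fold construction — effectively replacing the single Gauss map by an iterate or by a finite extension of the dynamical system. (3) Identify the relevant transfer (Perron--Frobenius) operator: for $\beta=p$ (the critical case) this should be, after conjugation, the classical Gauss--Ruelle--Mayer operator or a $p$-branch variant thereof, and invoke ergodicity/exactness of the associated Gauss-type map to conclude that the only invariant $L^1$ (resp. $L^\infty$) solution is trivial, giving the HUP for $\beta\le p$. (4) For $\beta>p$, construct an explicit nonzero $\mu\in X(\Gamma)$ with $\hat\mu=0$ on $\Lambda_\beta^\theta$: here one exploits that when $\beta/p>1$ the associated exponentials no longer span, and a lacunarity/gap argument (or an explicit invariant density supported on a proper invariant set for the now non-ergodic map) produces the counterexample, exactly paralleling the $\beta>1$ branch of Theorem \ref{th2} but with the threshold pushed up by the factor $p$.

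The main obstacle I anticipate is step (3) in the critical case $\beta=p$: one must show that introducing the shift $\theta=1/p$ and then taking the natural $p$-fold cover of the dynamical system still yields an \emph{ergodic} (indeed exact) transformation, so that no nontrivial invariant function survives. The shift breaks the exact periodicity that made the original Gauss map appear directly, and one has to check that the finite extension does not create extra invariant sets — equivalently, that a certain finite-dimensional system of functional equations coupling the $p$ "sheets" has only the zero solution. Verifying this, presumably by an explicit diagonalization over the $p$-th roots of unity reducing back to the irrationality/ergodicity input from \cite{HR}, or by a direct Perron--Frobenius spectral-radius argument, is where the real work lies. The easier direction, $\beta>p\Rightarrow$ not a HUP, should follow the template in \cite{HR} with only bookkeeping changes to account for $\theta$.
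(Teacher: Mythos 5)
Your overall plan is the right one and, for the sufficiency direction, coincides in strategy with the paper: reduce by duality to the weak-star density of $\{e^{\pi i(n+1/p)x}\}\cup\{e^{\pi in\beta/x}\}$ in $L^\infty(\mathbb R)$, then analyse a Gauss-type map and its Perron--Frobenius operator. But the two places where you locate the ``real work'' are resolved in the paper by different devices than the ones you sketch. First, the shift $\theta=1/p$ is not handled by a $p$-fold cover or a diagonalization over $p$-th roots of unity: the paper observes that the weak-star closed span of $\{e^{\pi i(n+1/p)x}\}$ is the space $L^\infty_p(\mathbb R)$ of $f$ with $x\mapsto e^{-\pi ix/p}f(x)$ $2$-periodic (hence $f$ is $2p$-periodic), and simply enlarges the fundamental domain to $(-p,p]$, working with the rescaled Gauss-type map $U(x)=p\{-p/x\}_2$ (and $U_\beta(x)=p\{-\beta/x\}_2$ for $\beta<p$). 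The invariant measure becomes $dx/(p^2-x^2)$, ergodicity of $|U|$ plus a Birkhoff argument (as in Proposition 2.4 of \cite{HR}) rules out unimodular eigenvalues of $\mathcal P_p$, and for $\beta<p$ a mass/support-shrinking iteration shows any $(U_\beta,\lambda)$-invariant absolutely continuous measure lives on a null set. So the ``extra invariant sets on the sheets'' problem you worry about never arises; no exactness is needed, only ergodicity. Your finite-extension route may well be made to work, but it is precisely the step you leave unverified, and the paper's rescaling makes it unnecessary. Second, for $\beta>p$ the paper does not build a lacunary counterexample or an invariant density on a non-ergodic invariant set: it extends the generating exponentials to bounded harmonic functions on $\mathbb C_+$ and shows (via Lemma 1.5 of \cite{HR} with $a=2p$, $b=2/\beta$) that they fail to separate the two points $z_{1,2}=p(\pm1+i\sqrt{\beta/p-1})$, which are distinct exactly when $\beta>p$; since point evaluation of the harmonic extension is weak-star continuous, density fails. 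This is cleaner than your step (4) and implicitly produces the annihilating measure as a difference of Poisson kernels. In short: same architecture, but you should replace the $p$-fold extension by the passage to $(-p,p]$ with the $2p$-periodic picture, and the necessity direction by the two-point separation argument.
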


\begin{corollary}\label{cor1}
Let $\Gamma=\{(x,y)\in\mathbb R^2 : xy=1\}$ be the hyperbola and $\Lambda_\beta^\theta$ 
be the set $\Lambda_\beta^\theta=\left((\mathbb Z+\{\theta\})\times\{0\}\right)\cup\left(\{0\}\times\beta\mathbb Z\right),$ where $\theta=q/p,~\text{for some}~p\in\mathbb N~\text{and}~q\in\mathbb Z$ with $\emph{\text{gcd(p,q)=1}}$ 
and $\beta$ is a positive real.  Then $\left(\Gamma,\Lambda_\beta^\theta\right)$ is a 
Heisenberg uniqueness pair if and only if $\beta\leq{p}.$
\end{corollary}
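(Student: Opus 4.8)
The plan is to reduce Corollary~\ref{cor1} to Theorem~\ref{th3}; the hypothesis $\gcd(p,q)=1$ should enter at exactly one place, and I would try to isolate it. First I would pass to the dual (density) formulation: identifying $L^\infty(\Gamma)$ with $L^\infty(\mathbb R,dt)$ through the parametrisation $t\mapsto(t,1/t)$ of $\Gamma$ (arc length and $dt$ being mutually absolutely continuous), the pair $(\Gamma,\Lambda_\beta^{q/p})$ is a HUP if and only if the only $g\in L^1(\mathbb R)$ with
\[\int_{\mathbb R}g(t)\,e^{\pi i(n+q/p)t}\,dt=0\quad\text{and}\quad\int_{\mathbb R}g(t)\,e^{\pi in\beta/t}\,dt=0\qquad(n\in\mathbb Z)\]
is $g=0$. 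Periodising each relation turns the first into $\sum_{k\in\mathbb Z}g(t+2k)\,e^{2\pi iqk/p}=0$ a.e.\ $t$, and the second into $\sum_{k\in\mathbb Z}\tilde g(u+2k)=0$ a.e.\ $u$, where $\tilde g(u)=\beta u^{-2}g(\beta/u)$. This is exactly the system treated in the proof of Theorem~\ref{th3}, except that there the case $\theta=1/p$ produces the weight $e^{2\pi ik/p}$ in the first relation, the second relation being unchanged.

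Next I would expose the $\mathbb Z/p\mathbb Z$-structure hidden in the first relation. Grouping by residues modulo $p$ and writing $G(t)=\sum_{m\in\mathbb Z}g(t+2pm)$ (which is $2p$-periodic), the first relation reads $\sum_{j=0}^{p-1}e^{2\pi iqj/p}\,G(t+2j)=0$, i.e.\ the $p$-tuple $\big(G(t),G(t+2),\dots,G(t+2(p-1))\big)$ annihilates the character $\chi_q\colon j\mapsto e^{2\pi iqj/p}$ of $\mathbb Z/p\mathbb Z$; for $\theta=1/p$ one gets the character $\chi_1$ instead. Since $\gcd(p,q)=1$, the map $\sigma(j)=\bar q\,j$ with $\bar q q\equiv1\pmod p$ is an automorphism of $\mathbb Z/p\mathbb Z$ and $\chi_q\circ\sigma=\chi_1$; so relabelling the $p$ ``sheets'' by $\sigma$ should carry a nonzero solution of the $\theta=q/p$ system to a nonzero solution of the $\theta=1/p$ system \emph{with the same $\beta$} — once one checks that this relabelling is also compatible with the second relation.

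That compatibility is the delicate point, and the hard part. In the proof of Theorem~\ref{th3} the combined system is recast dynamically: one obtains a $p$-to-one extension of a fixed, $\theta$-independent Gauss-type map built only from $u\mapsto\beta/u$ and the $2\mathbb Z$-folding, the perturbation $\theta$ entering solely through a $\mathbb Z/p\mathbb Z$-valued cocycle of cohomology class that of the constant $1$ when $\theta=1/p$ and that of the constant $q$ when $\theta=q/p$. Because $\sigma$ acts only on the fibre $\mathbb Z/p\mathbb Z$ and fixes the base map, it conjugates the constant-$q$ extension to the constant-$1$ extension \emph{precisely because it is a group automorphism of $\mathbb Z/p\mathbb Z$} — again the only place where $\gcd(p,q)=1$ is invoked. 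Hence the $\theta=q/p$ and $\theta=1/p$ systems have nonzero solutions for the same values of $\beta$, and Theorem~\ref{th3} yields that $(\Gamma,\Lambda_\beta^{q/p})$ is a HUP if and only if $\beta\le p$.

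I expect the real work to be in that third step: writing the hyperbola-arm relation in a form where the $\mathbb Z/p\mathbb Z$-action is manifest and commutes with $\sigma$, and verifying that the Gauss-type base dynamics does not mix the $p$ sheets in a manner sensitive to $q$. If formulating this compatibility abstractly turns out to be awkward, the fallback is simply to run the proof of Theorem~\ref{th3} over again with $1/p$ replaced by $q/p$ throughout, observing that $q$ occurs only through the cocycle (equivalently the character $\chi_q$) above, whose sole relevant property — being a generator of $\mathbb Z/p\mathbb Z$, i.e.\ a cocycle whose extension is connected — is exactly what $\gcd(p,q)=1$ provides; when $\gcd(p,q)=d>1$ the same bookkeeping would instead give the threshold $\beta\le p/d$, consistent with reducing $q/p$ to lowest terms.
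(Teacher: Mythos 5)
The paper states this corollary without any proof at all; the intended argument is evidently what you call your ``fallback'': rerun the proof of Theorem \ref{th3} with $1/p$ replaced by $q/p$ and observe that $\gcd(p,q)=1$ is used in exactly two places. In the necessity half, the separation conditions give $z_1-z_2\in 2\mathbb Z$ and $\tfrac{q}{p}(z_1-z_2)\in 2\mathbb Z$, which force $z_1-z_2\in 2p\mathbb Z$ precisely when $\gcd(p,q)=1$ (otherwise only $z_1-z_2\in \tfrac{2p}{d}\mathbb Z$ with $d=\gcd(p,q)$, lowering the threshold to $p/d$); in the sufficiency half, the weak-star closed span of $\{e^{\pi i(n+q/p)x}\}$ consists of functions with $f(x+2)=e^{2\pi iq/p}f(x)$, whose minimal period is $2p$ exactly when $e^{2\pi iq/p}$ is a primitive $p$-th root of unity, i.e.\ when $\gcd(p,q)=1$; the hyperbola-arm conditions never see $q$. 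Your fallback, including the prediction of the threshold $p/d$ when $\gcd(p,q)=d>1$, is therefore correct and is essentially the paper's (implicit) route.

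Your primary route, however, has a genuine gap at exactly the point you flag as delicate, and I do not believe it can be repaired in the form stated. The $\mathbb Z/p\mathbb Z$-valued cocycle attached to the Gauss-type dynamics is not a constant: it is the branch index $j$ of the map $x\mapsto -\beta/x+2j$, and $\theta=q/p$ enters as the character $\chi_q$ evaluated on that cocycle. Relabelling the fibre by the automorphism $\sigma(j)=\bar q j$ identifies the $\chi_q$-twisted transfer operator of the skew product with cocycle $c$ with the $\chi_1$-twisted transfer operator of the skew product with cocycle $qc$ --- not with cocycle $c$ --- and $qc$ is in general not cohomologous to $c$. Concretely, permuting the $p$ sheets of $g$ (translating its pieces by multiples of $2$ within each $2p$-block) does convert the first periodisation identity from the character $\chi_q$ to $\chi_1$, but it scrambles the second identity $\sum_k \tilde g(u+2k)=0$, since that identity couples the values of $g$ at the points $\beta/(u+2k)$, which are not respected by integer translations in $t$. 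So the relabelling does not carry solutions of the $\theta=q/p$ system to solutions of the $\theta=1/p$ system, and the reduction to Theorem \ref{th3} as a black box does not go through; you should promote the fallback to the actual proof.
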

\smallskip

\begin{remark}
\begin{enumerate}[(i)]
\item The definition of HUP, may be extended to include more general measures $\mu$'s, 
than only those which are absolutely continuous with respect to the arc length measure 
on $\Gamma.$ But for our purpose, it is essential that the measure  
$\mu$ is absolutely continuous with respect to the arc length measure, for without it  
Theorem \ref{th3} is not true. To see this, we consider the singular measure 
$\mu=\delta_u-\delta_v$ on $\Gamma=\{(x,y)\in\mathbb R^2 : xy=1\},$ where $\delta_s$ 
denotes the point mass at $s\in\Gamma$ and $u=(u_0,1/{u_0})\in\Gamma,$ $v=(v_0,1/{v_0})\in\Gamma.$  
Then it is clear from (\ref{eq01}) that 
\[\hat\mu{(\xi,0)}=e^{\pi i\xi u_0}-e^{\pi i\xi v_0}~~\text{and}~~\hat\mu{(0,\eta)}
=e^{\pi i\eta/u_0}-e^{\pi i\eta/v_0}.\] A simple computation shows that there are  
many different points like $u_0,v_0\in\mathbb R\setminus\{0\}$ such that $\hat\mu|_{\Lambda_\beta^\theta}=0.$
\smallskip

\item The presence of $\theta$ showing up in the condition of $\beta$ is 
somewhat unexpected. The proof of Theorem \ref{th3} is a careful adaptation 
of methods in \cite{HR}. The question is still open when $\theta$ is irrational. 
\smallskip 

\item For any polynomial $P$ in $\mathbb R^2,$ the Fourier transform of $\mu$ satisfies 
\[P\left(\dfrac{\partial_\xi}{i\pi}, \dfrac{\partial_\eta}{i\pi}\right)\hat\mu{( \xi,\eta)}
=\int_{\mathbb R^2} e^{i\pi(x\xi+ y\eta)}P(x,y)d\mu(x,y)\] 
in the sense of distributions. In particular, if $P(x,y):=xy-1$ and $\Gamma$ is the 
zero set of $P,$ i.e., the hyperbola $xy=1,$ then any $\mu\in X(\Gamma),$ $u:=\hat\mu$ is a solution   
of the one-dimensional Klein-Gordon equation   
\[\left(\partial_\xi\partial_\eta+\pi^2\right)u(\xi,\eta)=0.\]
Therefore, $\left(\Gamma,\Lambda\right)$ is a HUP if and only if the above partial differential equation, 
with the initial condition $u=0$ on $\Lambda$ has only a trivial solution. 

\smallskip

\item Let $\mathcal{AC}(\Gamma,\Lambda)=\{\mu\in X(\Gamma) : \widehat\mu|_{\Lambda}=0\}.$ 
Let $\Gamma, \Lambda_\beta^\theta$ be as in the Theorem \ref{th3}. Then 
$\mathcal{AC}(\Gamma,\Lambda_\beta^\theta)=\{0\}$ if and only if $\beta\leq{p}.$ 
For $p<\beta<\infty,$ it seems likely, that $\mathcal{AC}(\Gamma,\Lambda_\beta^\theta)$ 
is infinite-dimensional in analogy with the results in \cite{MHR}. This question 
is still open. 
\end{enumerate}
\end{remark}

\section{Proof of the main result}

Let $\Gamma=\{(x,y)\in\mathbb R^2 : ~xy=1\}$ be the hyperbola and $\mu\in X(\Gamma),$ 
then there exists $f\in L^1(\mathbb R, \sqrt{1+1/t^4}~dt)$ 
such that for bounded and continuous function $\varphi$ on $\mathbb R^2,$
\[\int_\Gamma\varphi(x,y)d\mu(x,y)=\int_{\mathbb R\setminus\{0\}}\varphi(t,1/t)f(t)\sqrt{1+1/t^4}dt.\] 
In particular, the Fourier transform of $\mu$ can be expressed as 
\[\hat\mu{(\xi_1,\xi_2)}=\int_{\mathbb R\setminus\{0\}} e^{\pi i(\xi_1 t+ \xi_2/t)}g(t)dt~~ 
\text{for}~~(\xi_1,\xi_2)\in\mathbb R^2,\] 
where $g(t)=f(t)\sqrt{1+1/t^4}.$ Let $\mathcal F$ be the subspace of all linear span of the functions 
$\{e^{p}_n(x)=e^{\pi i(n+1/p)x};~n\in\mathbb Z\}\cup\{e^{\beta}_n(x)
=e^{\pi in\beta/x};~n\in\mathbb Z\}$ in $L^\infty(\mathbb R),$  
where $p\in\mathbb N$ and $\beta$ is a positive real. By duality, 
Theorem \ref{th3} is equivalent to the following density result.
\smallskip

\begin{theorem}\label{th4}
The set $\mathcal F$ is weak-star dense in $L^\infty(\mathbb R)$ 
if and only if $0<\beta\leq{p}.$
\end{theorem}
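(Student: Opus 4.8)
The plan is to reduce Theorem~\ref{th4} to a dynamical statement about an interval map, following the strategy of Hedenmalm and Montes-Rodr\'iguez but tracking the effect of the shift $\theta=1/p$. First I would note that a measure $g\,dt$ annihilates all of $\mathcal F$ precisely when its Fourier transform vanishes on $\Lambda_\beta^\theta$, i.e.\ when
\[
\int_{\mathbb R\setminus\{0\}} e^{\pi i(n+1/p)t}\,g(t)\,dt=0\quad(n\in\mathbb Z)
\qquad\text{and}\qquad
\int_{\mathbb R\setminus\{0\}} e^{\pi i n\beta/t}\,g(t)\,dt=0\quad(n\in\mathbb Z).
\]
The first family of conditions, after the substitution absorbing $e^{\pi it/p}$ into $g$, says that the $2$-periodization of $e^{\pi it/p}g(t)$ vanishes; equivalently $g$ splits as $g=g_++g_-$ supported on $(0,\infty)$ and $(-\infty,0)$ with each piece, when folded modulo $2$ and twisted by the character $e^{\pi it/p}$, summing to zero. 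Unwinding this and passing to the variable on a bounded interval, one obtains — exactly as in \cite{HR} but with an extra root-of-unity weight coming from $\theta=1/p$ — that the existence of a nonzero annihilating $g$ is equivalent to the existence of a nonzero invariant (or suitably twisted-invariant) element for a transfer/Perron--Frobenius operator attached to the Gauss-type map $t\mapsto \beta/t \bmod 2$, acting on an $L^1$ space over an interval.

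Next I would make the parameter $\beta$ explicit in this dynamical reduction. Rescaling so that the relevant circle has the fixed length $2$, the map in question becomes the linear-fractional interval map whose continued-fraction-like dynamics depend on $\beta$; the point is that it is \emph{ergodic} (indeed, exact) when the relevant "defect" is small and fails to be uniquely ergodic — acquiring extra invariant densities — when it is large. With the $\theta=1/p$ twist, the character $e^{\pi it/p}$ is invariant under the $t\mapsto t+2$ step only up to a $p$-th root of unity, so the correct statement is about the spectrum of the weighted transfer operator at those roots of unity; the threshold at which a nonzero twisted-invariant density first appears gets pushed from $\beta\le 1$ up to $\beta\le p$, because one now has $p$ "sheets" to fill before the obstruction kicks in. Concretely, I expect the sufficiency direction ($\beta\le p\Rightarrow$ density) to follow from an ergodicity/unique-ergodicity statement for the twisted Gauss-type map — showing the only twisted-invariant $L^1$ density is $0$ — proved by a Perron--Frobenius / distortion argument as in \cite{HR}, combined with the observation that the $p$ branches of the character are permuted transitively by the dynamics when $\beta\le p$.

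For the necessity direction ($\beta>p\Rightarrow$ not a HUP) I would construct an explicit nonzero $\mu\in X(\Gamma)$, equivalently a nonzero $g\in L^1$, annihilating $\mathcal F$. The standard device is to exhibit an eigenfunction (a nonconstant bounded invariant function) for the dual of the transfer operator when $\beta>p$: when the length parameter exceeds $p$, the map on the interval is no longer transitive — it has an invariant subinterval (or the $p$-fold cover splits), and the indicator of such a set, pulled back through all the changes of variables and desymmetrized, gives the required annihilating measure. One must check that this $g$ lies in the weighted $L^1$ space and genuinely kills both families $\{e^p_n\}$ and $\{e^\beta_n\}$; the periodization conditions are automatic by construction, and the twist by $e^{\pi it/p}$ is handled because the invariant set is chosen compatibly with the $\mathbb Z/p$ symmetry.

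The main obstacle, as in the original argument, is the sufficiency direction at the \emph{endpoint} $\beta=p$ and, more generally, controlling the transfer operator uniformly: proving that the twisted Gauss-type map has no nonzero invariant density in $L^1$ requires a genuine ergodic-theoretic input (an exactness or mixing statement, or a Ryll-Nardzewski / mean-ergodic argument on $L^1$), and the presence of the root-of-unity weight means one cannot simply quote \cite{HR} verbatim — one has to verify that the weighted operator still has a trivial fixed space, which amounts to checking that the $p$ branches of the character are not separated by any invariant partition of the interval when $\beta\le p$. Making that combinatorial/dynamical count precise, and matching it with the arithmetic of $\theta=1/p$, is where the real work lies; the rest is a careful but routine translation of the duality and the change of variables $x\mapsto \beta/x$, $x\mapsto x+2$.
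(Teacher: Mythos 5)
Your sufficiency sketch is broadly in the spirit of what the paper does (reduction to a Gauss-type interval map and the spectrum of its Perron--Frobenius operator on the unit circle), though the paper does not introduce a root-of-unity--weighted transfer operator: it absorbs the shift $\theta=1/p$ by rescaling the interval to $(-p,p]$ and working with the unweighted map $U_\beta(x)=p\{-\beta/x\}_2$, the factorization $\mathrm{I}-\mathrm{T}_{\beta}\mathrm{S}=(\mathrm{I}+\mathcal C_\beta)(\mathrm{I}-\mathcal C_\beta)$, Birkhoff's ergodic theorem at the endpoint $\beta=p$, and a mass-leakage argument for $0<\beta<p$. That half of your plan could plausibly be completed, except that the step where the threshold $\beta\le p$ is actually supposed to emerge --- your claim that the ``$p$ sheets'' or ``branches of the character'' are permuted transitively precisely when $\beta\le p$ --- is never made precise and is not how the paper obtains the bound.

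The genuine gap is in your necessity direction. You propose, for $\beta>p$, to build a nonzero annihilating $g\in L^1$ from the indicator of an invariant subinterval of the interval map. This fails on two counts. First, the indicator of an invariant set is a fixed vector of the Koopman (composition) operator on $L^\infty$, not an $L^1$ eigenfunction of the Perron--Frobenius operator; to manufacture an absolutely continuous annihilating measure you would need an absolutely continuous invariant (eigen)measure, which is a much harder object --- describing the annihilator for $\beta>1$ in the unshifted problem is the entire content of \cite{MHR} and requires substantial machinery, and the analogous question for $\Lambda_\beta^\theta$ is explicitly left open in the paper's Remark. Second, no such construction is needed: the paper proves necessity by extending the functions $e^p_n$ and $e^\beta_n$ to bounded harmonic functions on the upper half-plane $\mathbb C_+$ and observing that weak-star density of $\mathcal F$ forces these functions to separate the points of $\mathbb C_+$ (point evaluation against the Poisson kernel being weak-star continuous); solving $e^p_n(z_1)=e^p_n(z_2)$ and $e^\beta_n(z_1)=e^\beta_n(z_2)$ for all $n$ leads, via Lemma 1.5 of \cite{HR}, to the explicit pair $z_{1,2}=p\left(\pm1+i\sqrt{\beta/p-1}\right)$, which are distinct points of $\mathbb C_+$ exactly when $\beta>p$. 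This separation argument is what your proposal is missing, and without it (or a genuinely worked-out construction of an annihilating measure) the ``only if'' half of the theorem is unproved.
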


\subsection{Proof of the necessary condition of Theorem \ref{th4}}

\begin{proof}
The proof relies on the fact that bounded harmonic functions on the upper 
half-plane $\mathbb C_+=\{z\in\mathbb C : ~\text{Im}~z>0\}$ separate points 
of $\mathbb C_+.$ 

The functions $e^{p}_n;~n\in\mathbb Z$ 
and $e^{\beta}_n;~n\in\mathbb Z$ extend to bounded harmonic functions on $\mathbb C_+,$ 
by letting

\[e^{p}_n(z)=\begin{cases} 
      e^{\pi i(n+1/p)z},~\text{Im~}z>0,~n\geq0 \\
      e^{\pi i(n+1/p)\bar z},~\text{Im}~z>0,~n<0, 
   \end{cases}
\]

and 
\[ e^{\beta}_n(z)=\begin{cases} 
      e^{\pi in\beta/\bar z},~\text{Im}~z>0,~n\geq0 \\
      e^{\pi in\beta/z},~\text{Im}~z>0,~n<0. 
   \end{cases}
\]

\smallskip

For $f\in L^\infty(\mathbb R)$ the bounded harmonic function extension of $f$ on 
$\mathbb C_+$ is defined via point evaluation with respect to the Poisson kernel $P_z:$
\[f(z)=\frac{1}{\pi}\int_{\mathbb R}f(t)P_z(t)dt,\] where 
$z=x+iy\in\mathbb C_+,$ and $P_z(t)=\frac{y}{(x-t)^2+y^2},~t\in\mathbb R.$ As 
$P_z(\cdot)\in L^1(\mathbb R),$ for $z\in\mathbb C_+,$ the point evaluation 
$f\rightarrow f(z)$ defines a weak-star continuous functional on $L^\infty(\mathbb R).$  
\smallskip

Moreover it is known that all bounded harmonic functions on  $\mathbb C_+$  arise as 
bounded harmonic extensions of functions in $L^\infty(\mathbb R).$ Therefore, if $\mathcal F$ 
is weak-star dense in $L^\infty(\mathbb R),$ then $\mathcal F$ must separate points of  $\mathbb C_+.$ 
\smallskip

Solving for $z_1,z_2\in\mathbb C_+$ such that 
\begin{equation}\label{eq20}
\begin{array}{lr}
e^{p}_n(z_1)=e^{p}_n(z_2)\\
e^{\beta}_n(z_1)=e^{\beta}_n(z_2)
\end{array} \text{for~all} ~n\in\mathbb Z,
\end{equation}
\smallskip

 
 we get
  
\[ \begin{cases} 
      z_1-z_2\in2{p}\mathbb Z, \\
      \frac{1}{z_1}-\frac{1}{z_2}\in\frac{2}{\beta}\mathbb Z. 
   \end{cases}
\]
In view of (Lemma 1.5, \cite{HR}), with $m=n=1,~a=2{p}$ and $b=2/\beta,$ we therefore have 
\[z_1={p}\left(1+\text{i}\sqrt{\beta/{p}-1}\right),~~z_2={p}\left(-1+\text{i}\sqrt{\beta/{p}-1}\right),\] 
which are two distinct points of $\mathbb C_+$ if and only if ${p}<\beta.$  
As a consequence if ${p}<\beta,$  $\mathcal F$ 
does not separate points of $\mathbb C_+.$ 

Thus the condition $0<\beta\leq{p}$ is necessary for 
$\left(\Gamma,\Lambda_\beta^\theta\right)$ to be a Heisenberg uniqueness pair.
\end{proof}

\subsection{Proof of sufficient condition in Theorem \ref{th4}}

To prove the sufficient condition in Theorem \ref{th4}, we need the following interlude  
on Gauss-type maps and invariant measures. In what follows we assume that $0<\beta\leq p.$  
\smallskip

\textit{A Gauss-type map.} For $u\in\mathbb R,$ the expression $\{u\}_2$ is the 
unique number in $(-1,1]$ such that $u-\{u\}_2\in2\mathbb Z.$ We consider a 
Gauss-type map $\tau$ on the interval $(-{p},{p}],$ which is topologically equivalent 
to $\mathbb R/{2{p}\mathbb Z}.$ The map $\tau$ is defined by letting 

\[\tau(x)=\begin{cases} 
      \left\{-\frac{{p}}{x}\right\}_2,~x\neq0 \\
      \quad 0 \qquad x=0. 
   \end{cases}
\] 

Observe that, for $j\in\mathbb Z^\ast=\mathbb Z\setminus\{0\},$ $\tau$ can be 
expressed as \[\tau(x)=-\frac{{p}}{x}+2j~\text{whenever}~\frac{{p}}{2j+1}<x\leq\frac{{p}}{2j-1},\] 
and hence $\tau: \left(\frac{{p}}{2j+1},\frac{{p}}{2j-1}\right]\rightarrow(-1,1]$ 
is one-to-one and for $x\in(-p,p]\setminus\frac{p}{2\mathbb Z+1},$ the derivative of $\tau$ 
is $\tau'(x)=\frac{p}{x^2}.$ Next, we define the map 
$U : (-p,p]\rightarrow(-p,p]$ by letting 
\begin{equation}
U(x)=p\tau(x).
\end{equation}

\textit{Invariant measures for Gauss-type map.} If $\varphi$ is a continuous $2p$-periodic 
function on $\mathbb R$ and $\nu$ is a finite complex-valued Borel measure on $(-p,p],$ 
then the integral 
\begin{equation}\label{eq39}
\int_{(-p,p]}\varphi(x)d\nu(x)
\end{equation}
is well-defined. A function $\varphi$ on $(-p,p]$ is said to be \textit{pseudo-continuous},  
if for every open subset $\mathcal O$ of $(-p,p]$ such that $(-p,p]\setminus\mathcal O$ is countable, 
$\varphi$ is bounded on $(-p,p]$ and continuous on $\mathcal O.$ In fact, (\ref{eq39}) 
makes sense if $\varphi$ is \textit{pseudo-continuous}. Also observe that for a 
\textit{pseudo-continuous} function $\varphi$ on $(-p,p],$ $\varphi\circ U$ is 
\textit{pseudo-continuous}. 
\smallskip

A finite complex Borel measure $\nu$ on $(-p,p]$ is $U$-invariant provided that 
\begin{equation}\label{eq49}
\int_{(-p,p]}\varphi\left(U(x)\right)d\nu(x)=\int_{(-p,p]}\varphi(x)d\nu(x)
\end{equation}
holds for all \textit{pseudo-continuous} functions $\varphi.$ For details about invariant 
measures for continuous maps see (P. 97, \cite{EW}). Rewriting the left hand side of (\ref{eq49}), we get 
\begin{eqnarray}
\nonumber\int_{(-{p},{p}]\setminus\{0\}}\varphi\left(U(x)\right)d\nu(x) 
&=& \sum\limits_{j\in\mathbb Z^\ast}\int_{\left(\frac{{p}}{2j+1},\frac{{p}}{2j-1}\right]}
\varphi\left({p} \tau(x)\right)d\nu(x)\\\nonumber
&=& \sum\limits_{j\in\mathbb Z^\ast}\int_{\left(\frac{{p}}{2j+1},\frac{{p}}{2j-1}\right]}
\varphi\left({p}\left(-\frac{{p}}{x}+2j\right)\right)d\nu(x)\\\nonumber
&=& \sum\limits_{j\in\mathbb Z^\ast}\int_{(-{p},{p}]}\varphi\left(t\right)
d\nu\left(\frac{{p}^2}{2{p} j-t}\right)\\\nonumber
&=& \sum\limits_{j\in\mathbb Z^\ast}\int_{(-{p},{p}]}\varphi\left(t\right)d\nu_j(t),
\end{eqnarray}
where $d\nu_j(t)=d\nu\left(\frac{{p}^2}{2{p} j-t}\right).$
Thus it follows that $\nu$ is $U$-invariant provided that  
\[\nu=\nu({0})\delta_0+\sum\limits_{j\in\mathbb Z^\ast}\nu_j.\] 
In addition, given $\lambda\in\mathbb C,$ complex Borel measure $\nu$ 
on $(-p,p]$ is $(U,\lambda)$-invariant provided that
\[\int_{(-p,p]}\varphi\left(U(x)\right)d\nu(x)=\lambda\int_{(-p,p]}\varphi(x)d\nu(x)\]
holds for all \textit{pseudo-continuous} functions $\varphi.$ Equivalently $\nu$ is 
$(U,\lambda)$-invariant if  
\[\lambda\nu=\nu({0})\delta_0+\sum\limits_{j\in\mathbb Z^\ast}\nu_j.\]
It is easy to see that, for $|\lambda|>1,$ there are no $(U,\lambda)$-invariant measures 
except for the zero measure.
\smallskip

\textit{Invariant measure of infinite mass.} Consider the positive $\sigma$-finite 
measure $\omega$ on $(-p,p]$ of infinite mass by letting 
\[d\omega(x)=\frac{dx}{{p}^2-x^2}.\] Then the measure $\omega$ is $U$-invariant as  
\[d\omega_j(t)=d\omega\left(\frac{{p}^2}{2{p} j-t}\right)=\frac{dt}{(2{p} j-t)^2-{p}^2}~\text{and}~ 
\sum\limits_{j\in\mathbb Z^\ast}\frac{1}{(2{p} j-t)^2-{p}^2}=\frac{1}{{p}^2-t^2},\]
and therefore $\omega$ satisfies \[\omega=\omega({0})\delta_0+\sum\limits_{j\in\mathbb Z^\ast}\omega_j.\]
\smallskip

Let $|U| : [0,p]\rightarrow [0,p]$ be the map $|U|(x)=|U(x)|.$ It can be shown that 
the measure $\omega$ is $|U|$-invariant. Also $|U|$ is ergodic. As a consequence of 
the \textit{Birkhoff ergodic theorem} we get the following result.   
\smallskip

\begin{proposition}\label{prop1}
Suppose $\lambda\in\mathbb C$ with $|\lambda|=1.$ Let $\nu$ be a absolutely 
continuous finite complex $(U,\lambda)$-invariant Borel measure on $(-{p},{p}].$ 
Then $\nu$ is zero.
\end{proposition}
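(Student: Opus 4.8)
The plan is to pass, via the duality between measures and densities, from $(U,\lambda)$-invariance to a fixed-point equation for a Perron--Frobenius (transfer) operator, then use $|\lambda|=1$ together with the positivity of that operator to replace $\nu$ by a genuinely invariant \emph{positive} finite measure, and finally invoke the ergodicity of $|U|$ together with the infinite mass of $\omega$ to force that positive measure to vanish.

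\textbf{Step 1 (transfer operator).} Since $\nu$ is absolutely continuous and finite, write $d\nu=\psi\,d\omega$ with $\psi\in L^1(\omega)$; this is legitimate because $\omega$ is equivalent to Lebesgue measure on the open interval and $\int|\psi|\,d\omega$ is the total variation of $\nu$. Using the branch structure of $U$ recorded above, with inverse branches $V_j(t)=p^2/(2pj-t)$ for $j\in\mathbb Z^\ast$, the characterisation $\lambda\nu=\nu(0)\delta_0+\sum_{j\in\mathbb Z^\ast}\nu_j$ (here $\nu(0)=0$ because $\nu$ is non-atomic) becomes the pointwise identity
\[
(L\psi)(t):=\sum_{j\in\mathbb Z^\ast}p_j(t)\,\psi\!\left(V_j(t)\right)=\lambda\,\psi(t),\qquad p_j(t)=\frac{p^2-t^2}{(2pj-t)^2-p^2}\ \ge 0 .
\]
The two structural facts I would isolate, both already contained in the computations above, are that $\sum_{j}p_j(t)=1$ for a.e.\ $t$ (so $L\mathbf 1=\mathbf 1$) and $\int_{(-p,p]}Lh\,d\omega=\int_{(-p,p]}h\,d\omega$ (so $L$ is a positive contraction of $L^1(\omega)$ preserving the integral); both reduce to the single identity $\sum_{j}\big((2pj-t)^2-p^2\big)^{-1}=(p^2-t^2)^{-1}$ used above to check $U$-invariance of $\omega$.

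\textbf{Step 2 (the $|\lambda|=1$ reduction and folding).} From $L\psi=\lambda\psi$ and positivity, $|\psi|=|\lambda\psi|=|L\psi|\le L|\psi|$ pointwise, while $\int\big(L|\psi|-|\psi|\big)\,d\omega=0$; a non-negative function of vanishing integral is zero, so $L|\psi|=|\psi|$ a.e., i.e.\ the positive finite measure $|\psi|\,d\omega$ is $U$-invariant. Applying the folding map $\pi(x)=|x|$, which intertwines $U$ and $|U|$ off a countable (hence $\omega$-null) set, produces a positive finite $|U|$-invariant measure $\bar\psi\,d\bar\omega$ on $[0,p]$, where $\bar\omega=\pi_\ast\omega$ is the $|U|$-invariant $\sigma$-finite measure of infinite mass, and $\bar\psi\ge 0$, $\bar\psi\in L^1(\bar\omega)$.

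\textbf{Step 3 (ergodic contradiction).} Suppose $\bar\psi\not\equiv 0$ and set $\mu=\bar\psi\,d\bar\omega$, a nonzero finite $|U|$-invariant measure with $\mu\ll\bar\omega$; on $A=\{\bar\psi>0\}$ one has $\mathbf 1_A=1$ $\mu$-a.e. Birkhoff's ergodic theorem applied to the \emph{finite} measure $\mu$ gives $\frac1N\sum_{k=0}^{N-1}\mathbf 1_A\circ|U|^k\to\mathbb E_\mu[\mathbf 1_A\mid\mathcal I_{|U|}]=1$ $\mu$-a.e., hence on a set of positive $\bar\omega$-measure. On the other hand, because $|U|$ is ergodic — hence conservative — and preserves the infinite $\sigma$-finite measure $\bar\omega$, these same averages tend to $0$ $\bar\omega$-a.e.: inducing on a set $A$ of finite positive $\bar\omega$-measure, Kac's formula gives $\int_A\phi_A\,d\bar\omega=\bar\omega([0,p])=\infty$ for the return-time $\phi_A$, so Birkhoff for $(A,\bar\omega|_A,|U|_A)$ makes the $n$-th return time grow superlinearly and the occupation ratio $\frac1N\sum_{k<N}\mathbf 1_A\circ|U|^k$ vanish (equivalently, apply the Hopf ratio ergodic theorem with a reference set of finite measure). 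Since $\mu\ll\bar\omega$, this contradicts the previous limit unless $\bar\psi\equiv 0$; tracing back, $\nu=\psi\,d\omega=0$.

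The main obstacle is Step 3 — more precisely, the fact that a conservative, ergodic, infinite-measure-preserving transformation carries no nonzero finite absolutely continuous invariant measure. This is the one genuinely ergodic-theoretic ingredient, and it is exactly here that the \emph{infinite} mass of $\omega$ and the ergodicity of $|U|$ (both asserted above) are used, through the induced-transformation argument alluded to as a consequence of Birkhoff's theorem. By contrast, Steps 1 and 2 — the transfer-operator reformulation, the $|\lambda|=1$ trick, and the reduction from $(-p,p]$ to $[0,p]$ — are routine once the branch structure of $U$ and the $U$-invariance of $\omega$ are available.
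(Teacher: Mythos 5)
Your proposal is correct and follows essentially the same route as the paper, which simply defers to the argument of Proposition 2.4 of Hedenmalm--Montes-Rodr\'iguez: pass to the transfer operator with respect to the infinite invariant measure $\omega$, use $|\lambda|=1$ and positivity to upgrade $\nu$ to a genuine positive finite invariant measure, fold onto $[0,p]$, and conclude from the fact that a conservative ergodic transformation preserving an infinite $\sigma$-finite measure admits no nonzero finite absolutely continuous invariant measure. The only point to tidy in Step 3 is that $A=\{\bar\psi>0\}$ need not have finite $\bar\omega$-measure, so one should induce on $A_\varepsilon=\{\bar\psi>\varepsilon\}$ (finite by Chebyshev since $\bar\psi\in L^1(\bar\omega)$) and compare $\int E_\mu[\mathbf 1_{A_\varepsilon}\mid\mathcal I]\,d\mu=\mu(A_\varepsilon)>0$ with the $\bar\omega$-a.e.\ vanishing of the occupation averages; alternatively, invoke directly the uniqueness (up to scalars) of the absolutely continuous $\sigma$-finite invariant measure for a conservative ergodic map.
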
 
The proof of Proposition \ref{prop1} works along the same lines as in proof of 
(Proposition 2.4, \cite{HR}).
\smallskip

\textit{Gauss-type map for $0<\beta<p$.}  
The Gauss-type map $\tau_\beta$ is defined by $\tau_\beta(0)=0,$ 
and for $x\neq0,$ \[\tau_\beta(x)=\left\{-\frac{\beta}{x}\right\}_2,\]
and the map $U_\beta : (-p,p]\rightarrow(-p,p]$ which is associated to $\tau_\beta$ 
is defined by letting 
\begin{equation}
U_\beta(x)=p\tau_\beta(x).
\end{equation} 
\smallskip

\begin{proposition}
For $\lambda\in\mathbb C$ with $|\lambda|=1,$ a finite complex Borel measure $\nu$ 
on $(-p,p]$ is $(U_\beta,\lambda)$-invariant provided that
\[\lambda\nu=\nu({0})\delta_0+\sum\limits_{j\in\mathbb Z^\ast}\nu_j,\] 
where $d\nu_j(t)=d\nu\left(\frac{p\beta}{2{p} j-t}\right),$ $t\in (-p,p].$
\end{proposition}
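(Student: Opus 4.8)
The plan is to mimic exactly the computation that was just carried out for the map $U$ in the case $\beta = p$, now tracking the scaling factor $\beta$ in place of the second $p$. First I would unwind the definition: for $x \neq 0$ with $\frac{\beta}{2j+1} < x \le \frac{\beta}{2j-1}$ (i.e., when $-\beta/x$ lands in $(2j-1, 2j+1]$ so that $\{-\beta/x\}_2 = -\beta/x + 2j$), we have $\tau_\beta(x) = -\beta/x + 2j$ and hence $U_\beta(x) = p\tau_\beta(x) = p(-\beta/x + 2j)$. Note that since $0 < \beta \le p$, the intervals $\left(\frac{\beta}{2j+1}, \frac{\beta}{2j-1}\right]$ for $j \in \mathbb Z^\ast$ together with $\{0\}$ tile $(-p,p]$ — actually they tile only $(-\beta,\beta]$, so on the complementary set $(-p,p]\setminus[-\beta,\beta]$ one must also describe $\tau_\beta$, but the branch formula extends: for any $x \ne 0$ there is a unique $j$ (possibly with the convention handling the boundary) so that $\tau_\beta(x) = -\beta/x + 2j$, and the decomposition of $(-p,p]\setminus\{0\}$ into the branch domains still holds, now indexed so the branches may be larger intervals. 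This bookkeeping about which branch domains are nonempty and how they sit inside $(-p,p]$ is the one genuinely new wrinkle compared to the $\beta = p$ case.

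Next I would write out the left-hand side of the $(U_\beta,\lambda)$-invariance relation exactly as in the displayed computation for $U$:
\begin{align*}
\int_{(-p,p]\setminus\{0\}} \varphi\left(U_\beta(x)\right)\, d\nu(x)
&= \sum_{j\in\mathbb Z^\ast} \int \varphi\left(p\left(-\frac{\beta}{x} + 2j\right)\right) d\nu(x),
\end{align*}
where the $j$-th integral is over the branch domain on which $\tau_\beta(x) = -\beta/x + 2j$. For each fixed $j$ I perform the change of variables $t = p\left(-\frac{\beta}{x} + 2j\right)$, equivalently $x = \frac{p\beta}{2pj - t}$; as $x$ ranges over its branch domain, $t = U_\beta(x)$ ranges over $(-1,1] \cdot p = (-p,p]$ — here one uses that $\tau_\beta$ maps each branch domain onto $(-1,1]$ so $U_\beta$ maps it onto $(-p,p]$. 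The pushforward of $\nu\restriction_{\text{branch }j}$ under $U_\beta$ is then exactly the measure $\nu_j$ with $d\nu_j(t) = d\nu\!\left(\frac{p\beta}{2pj - t}\right)$, $t \in (-p,p]$. Summing over $j$ and including the atom at $0$ (on which $U_\beta(0) = 0$ contributes $\nu(\{0\})\delta_0$, the $\varphi(0)$ term) gives
\[
\int_{(-p,p]} \varphi\left(U_\beta(x)\right) d\nu(x) = \int_{(-p,p]} \varphi(t)\, d\Bigl(\nu(\{0\})\delta_0 + \sum_{j\in\mathbb Z^\ast}\nu_j\Bigr)(t).
\]
Comparing with the definition of $(U_\beta,\lambda)$-invariance, namely that the left side equals $\lambda \int \varphi\, d\nu$, and using that pseudo-continuous $\varphi$ separate finite Borel measures on $(-p,p]$, yields the stated identity $\lambda\nu = \nu(\{0\})\delta_0 + \sum_{j\in\mathbb Z^\ast}\nu_j$. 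Conversely, if this measure identity holds, the same chain of equalities run backwards shows $\int \varphi(U_\beta(x))\,d\nu(x) = \lambda\int\varphi\,d\nu$ for all pseudo-continuous $\varphi$.

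The main obstacle I anticipate is purely the interval-tiling bookkeeping: in the clean case $\beta = p$ the branch domains $\left(\frac{p}{2j+1},\frac{p}{2j-1}\right]$ exactly exhaust $(-p,p]\setminus\{0\}$, whereas for $\beta < p$ the sets $\left(\frac{\beta}{2j+1},\frac{\beta}{2j-1}\right]$ only exhaust $(-\beta,\beta]\setminus\{0\}$; one must check that the branch-wise description of $\tau_\beta$ (and hence the change-of-variables and pushforward identities) nevertheless covers all of $(-p,p]\setminus\{0\}$, with each branch mapping \emph{onto} $(-p,p]$ under $U_\beta$ so that no mass of $\nu$ is lost and the sum $\sum_j \nu_j$ is the correct total pushforward. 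Verifying Fubini/absolute-convergence for the interchange of sum and integral (using finiteness of $\nu$, exactly as in \cite{HR}) is routine and I would only remark on it. Everything else is a verbatim adaptation of the $U$-computation already displayed in the excerpt, with $p$ replaced by $\beta$ in the ``numerator'' of the Gauss branch.
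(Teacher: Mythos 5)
Your overall strategy --- decompose the domain into the branches of $\tau_\beta$, change variables $x=\frac{p\beta}{2pj-t}$ on each branch, and read off the pushforward identity --- is exactly the computation the paper performs. But the one step you set aside as ``interval-tiling bookkeeping'' is where the argument actually breaks, and the resolution you anticipate is false. For $|x|>\beta$ one has $-\beta/x\in(-1,1)$, so $\tau_\beta(x)=\{-\beta/x\}_2=-\beta/x$ with no integer shift: the complementary region $\{x\in(-p,p]:|x|>\beta\}$ is precisely the $j=0$ branch, which is not indexed by any $j\in\mathbb Z^\ast$. Consequently your displayed sum over $\mathbb Z^\ast$ omits the contribution of $\nu$ on that region, and the first equality fails whenever $|\nu|$ charges it. Nor can this be repaired by the surjectivity claim you propose to verify: $U_\beta(x)=-p\beta/x$ there carries $(\beta,p]$ onto $(-p,-\beta]$ and $(-p,-\beta]$ onto $(\beta,p]$, so the $j=0$ branch inside $(-p,p]$ is \emph{not} mapped onto all of $(-p,p]$; and honestly including it would add an extra term $d\nu_0(t)=d\nu(-p\beta/t)$ that is absent from the identity being proved. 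So the ``check'' you defer cannot succeed as stated.

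The correct reading --- and the one the paper in effect uses --- is that $(U_\beta,\lambda)$-invariance here is invariance with respect to the weighted Koopman operator $\mathcal C_\beta[\varphi]=(\varphi\circ U_\beta)\,\chi_{(-\beta,\beta]}$: the left-hand integral is taken over $(-\beta,\beta]$ only, exactly as in the relation $\lambda\int_{(-p,p]}\varphi\,d\nu=\int_{(-\beta,\beta]}\varphi(p\tau_\beta(x))\,d\nu(x)$ through which the proposition is applied in Case 2 of the next proof. With that cutoff the branches $\left(\frac{\beta}{2j+1},\frac{\beta}{2j-1}\right]$, $j\in\mathbb Z^\ast$, do exhaust the domain of integration, each is mapped onto $(-p,p]$, and the computation is verbatim the $\beta=p$ one. (In fairness, the paper's own display also writes the left integral over $(-p,p]\setminus\{0\}$ before silently discarding the $j=0$ region, so the ambiguity is inherited from the source; but your proposal elevates the false onto-claim to the status of the key verification, which is a genuine gap rather than routine bookkeeping.)
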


\begin{proof}
A finite complex Borel measure $\nu$ on $(-{p},{p}]$ is $(U_\beta,\lambda)$-invariant provided that 
\[\int_{(-{p},{p}]}\varphi\left(U_\beta(x)\right)d\nu(x)=\lambda\int_{(-{p},{p}]}\varphi(x)d\nu(x)\] 
holds for all pseudo-continuous test functions $\varphi$ on $(-p,p].$ Observe that 
\begin{eqnarray}
\nonumber\int_{(-{p},{p}]\setminus\{0\}}\varphi\left(U_\beta(x)\right)d\nu(x) 
&=& \sum\limits_{j\in\mathbb Z^\ast}\int_{\left(\frac{\beta}{2j+1},\frac{\beta}{2j-1}\right]}
\varphi\left({p} \tau_\beta(x)\right)d\nu(x)\\\nonumber
&=& \sum\limits_{j\in\mathbb Z^\ast}\int_{\left(\frac{\beta}{2j+1},\frac{\beta}{2j-1}\right]}
\varphi\left({p}\left(-\frac{\beta}{x}+2j\right)\right)d\nu(x)\\\nonumber
&=& \sum\limits_{j\in\mathbb Z^\ast}\int_{(-{p},{p}]}\varphi\left(t\right)
d\nu\left(\frac{p\beta}{2{p} j-t}\right)\\\nonumber
&=& \sum\limits_{j\in\mathbb Z^\ast}\int_{(-{p},{p}]}\varphi\left(t\right)d\nu_j(t).
\end{eqnarray}
Thus a measure $\nu$ is $(U_\beta,\lambda)$-invariant only if 
\[\lambda\nu=\nu({0})\delta_0+\sum\limits_{j\in\mathbb Z^\ast}\nu_j.\] 
\end{proof}

\smallskip

{\bf\textit{The sum space.}}  Let $L^\infty_{{p}}(\mathbb R)$ denote the subspace of 
all functions $f\in L^\infty(\mathbb R)$ such that the map $x\longmapsto e^{-\pi ix/p}f(x)$  
is 2-periodic. Then the weak-star closure in $L^\infty(\mathbb R)$ of linear 
span of $\{e^{p}_n(x)=e^{\pi i(n+1/p)x};~n\in\mathbb Z\}$ equals $L^\infty_{{p}}(\mathbb R).$
\smallskip

Similarly, let $L^\infty_{\beta}(\mathbb R)$ denote the subspace of all functions 
$f\in L^\infty(\mathbb R)$ such that the map $x\longmapsto f(\beta/x)$ is 2-periodic. 
Then the weak-star closure in $L^\infty(\mathbb R)$ of linear span of  
$\{e^{\beta}_n(x)=e^{\pi in\beta/x};~n\in\mathbb Z\}$ equals $L^\infty_{\beta}(\mathbb R).$ 
\smallskip

Consider the sum space $L^\infty_{{p}}(\mathbb R)+L^\infty_{\beta}(\mathbb R).$ 
The sufficient part of Theorem \ref{th4} is equivalent to the following density result.
\smallskip

\begin{theorem}\label{th5}
For $0<\beta\leq{p},$ the sum space \[L^\infty_{{p}}(\mathbb R)+L^\infty_{\beta}(\mathbb R)\] 
is weak-star dense in $L^\infty(\mathbb R).$ 
\end{theorem}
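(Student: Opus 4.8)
The plan is to prove Theorem \ref{th5} by contradiction via duality, exactly as in the analogous argument of \cite{HR}. Suppose the sum space $L^\infty_{p}(\mathbb R)+L^\infty_{\beta}(\mathbb R)$ is \emph{not} weak-star dense in $L^\infty(\mathbb R)$. Then, since the sum space is itself a weak-star closed subspace (being the sum of two such subspaces, each the annihilator of a nice $L^1$-space), by the Hahn--Banach theorem there is a nonzero functional in the predual $L^1(\mathbb R)$ annihilating it; that is, there exists a nonzero $h\in L^1(\mathbb R)$ with $\int_{\mathbb R} e^{p}_n(x)h(x)\,dx=0$ for all $n\in\mathbb Z$ and $\int_{\mathbb R} e^{\beta}_n(x)h(x)\,dx=0$ for all $n\in\mathbb Z$. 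First I would unwind these two families of vanishing conditions. The conditions against $\{e^{\beta}_n\}$ say that, after the change of variables $x\mapsto \beta/x$, the resulting $L^1$ density has all its Fourier coefficients with respect to the $2$-periodization vanishing; so the $2$-periodization of a suitable push-forward of $h$ is zero. Likewise the conditions against $\{e^{p}_n(x)=e^{\pi i(n+1/p)x}\}$ say that $e^{-\pi ix/p}h(x)$ has vanishing Fourier coefficients on the $2$-periodization, so its $2$-periodization vanishes.

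Next I would translate these two periodization-vanishing statements into an invariance statement for a measure on the fundamental domain $(-p,p]$. Folding $h$ first by the $p$-related relation and then transporting through the map $x\mapsto \beta/x$ (which on the level of fundamental domains is implemented by $U_\beta$), the two conditions combine to say precisely that a certain finite complex Borel measure $\nu$ on $(-p,p]$ built from $h$ is $(U_\beta,\lambda)$-invariant for the unimodular constant $\lambda=e^{2\pi i/p}$ arising from the shift $n\mapsto n+1/p$ in $e^p_n$ — this is where the Proposition on $(U_\beta,\lambda)$-invariant measures and the formula $\lambda\nu=\nu(0)\delta_0+\sum_{j\in\mathbb Z^\ast}\nu_j$ enters. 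The key point is that because $h\in L^1(\mathbb R)$, the measure $\nu$ is absolutely continuous with respect to Lebesgue measure on $(-p,p]$, and it is finite. Then Proposition \ref{prop1} (in its $U_\beta$ version, which follows from the ergodicity of $|U_\beta|$ for the infinite-mass invariant measure $d\omega = dx/(p^2-x^2)$ together with the Birkhoff ergodic theorem, exactly as Proposition 2.4 of \cite{HR}) forces $\nu=0$, hence $h=0$ a.e., contradicting $h\neq 0$. For the boundary case $\beta=p$ the map $U_\beta=U$ is the original Gauss-type map and one uses Proposition \ref{prop1} directly; for $0<\beta<p$ one uses the $U_\beta$-version.

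I expect the main obstacle to be the careful bookkeeping in the second step: translating the pair of "vanishing $2$-periodization" conditions on $h\in L^1(\mathbb R)$ into a genuine finite, absolutely continuous, $(U_\beta,\lambda)$-invariant measure on $(-p,p]$, keeping track of where the twist $\lambda=e^{2\pi i/p}$ comes from and verifying that the push-forwards $\nu_j$ assemble correctly under the change of variables $t\mapsto p\beta/(2pj-t)$. In particular one must check that summing the pieces of $h$ coming from the intervals $(\beta/(2j+1),\beta/(2j-1)]$ after the substitution $x=\beta/s$ does produce an $L^1$ (hence finite) density on $(-p,p]$, using $h\in L^1$ and the Jacobian factors; this is the genuinely new computation relative to \cite{HR} because the interval lengths and the map no longer match up as cleanly when $\beta<p$. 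Once the invariant-measure reformulation is in place, the ergodic-theoretic conclusion is essentially identical to \cite{HR}. A secondary technical point is justifying that the sum space is weak-star closed (so that Hahn--Banach applies with a contradiction rather than just producing a limit), which follows because each summand is weak-star closed and, on the relevant fundamental domains, the two closed subspaces are in a suitable sense "transverse" — here again one can follow the structure of \cite{HR}, invoking that $L^\infty_p(\mathbb R)$ and $L^\infty_\beta(\mathbb R)$ are the weak-star closed spans computed above and that their sum being closed reduces to the same ergodicity input.
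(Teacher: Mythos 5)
Your overall strategy --- dualize via Hahn--Banach to a nonzero $h\in L^1(\mathbb R)$ annihilating both families, convert the two annihilation conditions into an absolutely continuous $(U_\beta,\lambda)$-invariant measure, and kill it with Proposition \ref{prop1} --- has the same ergodic core as the paper, but the paper organizes the reduction differently: it first proves that $\text{I}-\text{T}_\beta\text{S}=(\text{I}+\mathcal C_\beta)(\text{I}-\mathcal C_\beta)$ has weak-star dense range (Proposition \ref{prop3}, by showing no unimodular $\lambda$ is an eigenvalue of $\mathcal P_\beta$) and then invokes the construction of Lemma 5.2 of \cite{HR} to pass from that to density of the sum space. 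The genuine gap in your proposal is precisely the step you defer as ``bookkeeping'': the claim that the two conditions combine into a $(U_\beta,\lambda)$-invariance with the \emph{constant} $\lambda=e^{2\pi i/p}$. Unwinding $h\perp L^\infty_p(\mathbb R)$ gives $\sum_{k\in\mathbb Z}e^{2\pi i k/p}h(x+2k)=0$ a.e., i.e.\ a \emph{twisted} $2$-periodization in which the phase $e^{2\pi ik/p}$ depends on which translate you are on (equivalently, a single linear relation $\sum_{r=0}^{p-1}e^{2\pi i r/p}H_r=0$ among the $2p$-periodizations $H_r$ of $h(\cdot+2r)$), not a constant multiple of an untwisted periodization. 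Composing this with the inversion coming from $h\perp L^\infty_\beta(\mathbb R)$ produces the fixed-point equation $\mathcal P_\beta^2\psi=\psi$ on the fundamental domain, i.e.\ eigenvalues $\pm1$ of $\mathcal P_\beta$ --- not eigenvalue $e^{2\pi i/p}$ --- and tracking where the position-dependent phases go is exactly what the operators $\text{S}$, $\text{T}_\beta$ and the identity $\text{T}_\beta\text{S}=\mathcal C_\beta^2$ are built to do. Without carrying this out, the appeal to Proposition \ref{prop1} is not yet licensed.

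Two further points. First, your assertion that the sum space is weak-star closed ``being the sum of two such subspaces'' is false in general (a sum of weak-star closed subspaces need not be closed; indeed if this particular sum were both closed and dense it would equal all of $L^\infty(\mathbb R)$, a far stronger statement). Fortunately it is also unnecessary: the standard duality criterion says a subspace of $L^\infty=(L^1)^\ast$ is weak-star dense iff its preannihilator in $L^1$ is trivial, with no closedness hypothesis. Second, even granting the invariant-measure reformulation, concluding $h=0$ from $\nu=0$ requires propagating the vanishing back from the fundamental domain to all of $\mathbb R$ through the two functional equations; this back-substitution also needs to be written down.
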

\smallskip

\textit{The composition operator.} Observe that the functions in $L^\infty_{{p}}(\mathbb R)$ 
are defined freely on $(-{p},{p}],$ and because of periodicity they are uniquely determined 
on $\mathbb R\setminus(-{p},{p}].$ Similarly, the functions in $L^\infty_{\beta}(\mathbb R)$ 
are defined freely on $\mathbb R\setminus(-\beta,\beta],$ and because of periodicity they are  
uniquely determined on $(-\beta,\beta].$ For $E\subseteq\mathbb R,$ the function $\chi_E$ 
denotes the characteristic function of $E$ on $\mathbb R.$
\smallskip

Define operator $\text{S} : L^\infty((-{p},{p}])\rightarrow L^\infty(\mathbb R\setminus(-{p},{p}])$  
by   
\[\text{S}[\varphi](x)=\varphi\left(p\left\{x/p\right\}_2\right)\chi_{\mathbb R\setminus(-{p},{p}]}(x),\]
where $\varphi\in L^\infty((-{p},{p}])$ and $x\in\mathbb R.$
\smallskip

The operator $\text{T}_{\beta} : L^\infty(\mathbb R\setminus(-\beta,\beta])\rightarrow L^\infty((-\beta,\beta])$ 
is defined by  
\[\text{T}_{\beta}[\psi](x)=\psi\left(\frac{\beta}{\{\beta/x\}_2}\right)\chi_{(-\beta,\beta]}(x),\] 
where $\psi\in L^\infty(\mathbb R\setminus(-\beta,\beta])$ and $x\in\mathbb R.$
A simple computation shows that for $\psi\in L^\infty(\mathbb R\setminus(-\beta,\beta]),$
\[\text{S}[\psi\circ I_\beta]\circ I_\beta=\text{T}_{\beta}[\psi],\] 
where $I_\beta(x)=-\beta/x.$ Then the composition operator $\text{T}_{\beta}\text{S} : L^\infty((-{p},{p}])\rightarrow L^\infty((-{p},{p}])$ is defined as   
\[\text{T}_{\beta}\text{S}[\varphi](x)=\varphi\left({p}\left\{\frac{\beta_0}{\{\beta/x\}_2}\right\}_2\right)\chi_{E_{\beta}}(x),\] 
for $\varphi\in L^\infty((-{p},{p}]),$ where $\beta_0=\beta/{p}$ and 
$E_{\beta}=\left\{x\in(-\beta,\beta]\setminus\{0\}: \frac{\beta_0}{\{\beta/x\}_2}\in\mathbb R\setminus(-1,1]\right\}.$
\smallskip

\textit{The Koopman operator.}
For $0<\beta\leq p,$ consider the weighted Koopman operator $\mathcal C_\beta : L^\infty((-{p},{p}])\rightarrow L^\infty((-{p},{p}])$ associated to $U_\beta$ by \[\mathcal C_\beta[\varphi](x)=\varphi\circ U_\beta(x)\chi_{(-\beta,\beta]}(x),\] 
where $x\in\mathbb R.$ The predual adjoint $\mathcal C_\beta^\ast$ of $\mathcal C_\beta$ is the Perron-Frobenius 
operator $\mathcal P_\beta : L^1((-p,p])\rightarrow L^1((-{p},{p}])$ given by 
\[\mathcal P_\beta [f](x)=\sum\limits_{j\in\mathbb Z^\ast}\frac{p\beta}{(2pj-x)^2}f\left(\frac{p\beta}{2pj-x}\right).\]
The operator $\mathcal P_\beta$ is linear and a norm contraction, that is, 
\[\|\mathcal P_\beta [f]\|_{L^1((-p,p])}\leq\|f\|_{L^1((-p,p])},\] 
for $f\in L^1((-p,p]).$ Thus the point spectrum $\sigma_{point}(\mathcal P_\beta)$ 
of $\mathcal P_\beta$ is contained in the closed unit disk $\bar{\mathbb D}=\{\lambda\in\mathbb C : |\lambda|\leq1\}.$

A simple computation gives $\text{T}_{\beta}\text{S}=\mathcal C_\beta^2.$
Thus the operator $\text{I}-\text{T}_{\beta}\text{S} : L^\infty((-{p},{p}])\rightarrow L^\infty((-{p},{p}])$ 
can be expressed as 
\begin{equation}\label{eq19}
\text{I}-\text{T}_{\beta}\text{S}=(\text{I}+\mathcal C_\beta)(\text{I}-\mathcal C_\beta).
\end{equation}

\begin{proposition}\label{prop3}
The range of the operator $\text{I}-\text{T}_{\beta}\text{S}$ is weak-star dense in 
$L^\infty((-{p},{p}]).$ 
\end{proposition}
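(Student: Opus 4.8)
The plan is to dualize: pass to the predual $L^1((-{p},{p}])$ and show that the predual adjoint of $\text{I}-\text{T}_{\beta}\text{S}$ is injective there. Since $\text{T}_{\beta}\text{S}=\mathcal C_\beta^2$ and $\mathcal C_\beta$ is weak-star continuous with predual adjoint $\mathcal C_\beta^\ast=\mathcal P_\beta$, the operator $\text{I}-\text{T}_{\beta}\text{S}=\text{I}-\mathcal C_\beta^2$ is weak-star continuous and is the predual adjoint of $\text{I}-\mathcal P_\beta^2=(\text{I}-\mathcal P_\beta)(\text{I}+\mathcal P_\beta)$ on $L^1((-{p},{p}])$, in accordance with the factorization \eqref{eq19}. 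For a bounded operator $A$ on a dual space which is the adjoint of a bounded operator $B$ on the predual, the weak-star closure of the range of $A$ is the annihilator of $\ker B$; hence Proposition \ref{prop3} is equivalent to the injectivity of $\text{I}-\mathcal P_\beta^2$ on $L^1((-{p},{p}])$.

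First I would reduce this to a spectral statement about $\mathcal P_\beta$. The two commuting factors $\text{I}-\mathcal P_\beta$ and $\text{I}+\mathcal P_\beta$ have $\ker(\text{I}-\mathcal P_\beta^2)=\ker(\text{I}-\mathcal P_\beta)+\ker(\text{I}+\mathcal P_\beta)$ (if $\mathcal P_\beta^2 f=f$ write $f=f_++f_-$ with $f_\pm=\tfrac12(f\pm\mathcal P_\beta f)$, so that $\mathcal P_\beta f_\pm=\pm f_\pm$); thus it suffices to show that $\pm1$ are not eigenvalues of $\mathcal P_\beta$ in $L^1((-{p},{p}])$. Since $\sigma_{point}(\mathcal P_\beta)\subseteq\bar{\mathbb D}$, these candidate eigenvalues lie on the unit circle, which is precisely the regime handled by Proposition \ref{prop1}.

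Next I would translate an $L^1$-eigenfunction of $\mathcal P_\beta$ on the unit circle into an invariant measure. Suppose $h\in L^1((-{p},{p}])$ and $\mathcal P_\beta h=\lambda h$ with $|\lambda|=1$, and set $d\nu=h\,dx$, a finite complex absolutely continuous Borel measure on $(-{p},{p}]$. Rewriting $\mathcal P_\beta h=\lambda h$ in measure form and recognizing the change of variables $t=p\beta/(2{p} j-x)$ that enters the characterization of $(U_\beta,\lambda)$-invariance above, one obtains $\lambda\nu=\nu(\{0\})\delta_0+\sum_{j\in\mathbb Z^\ast}\nu_j$ with $d\nu_j(t)=d\nu(p\beta/(2{p} j-t))$; the atomic term is absent because $\nu$ is absolutely continuous, so $\nu$ is $(U_\beta,\lambda)$-invariant. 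By Proposition \ref{prop1} (respectively its counterpart for $U_\beta$ when $0<\beta<{p}$, obtained along the same lines from the $|U_\beta|$-invariant $\sigma$-finite measure and the Birkhoff ergodic theorem) we conclude $\nu=0$, i.e.\ $h=0$. Applying this with $\lambda=1$ and $\lambda=-1$ shows that $\text{I}-\mathcal P_\beta^2$ is injective, which completes the proof.

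The step I expect to be the main obstacle is the bookkeeping around the duality: one must check carefully that $\text{I}-\text{T}_{\beta}\text{S}$ really is the predual adjoint of $\text{I}-\mathcal P_\beta^2$, tracking the cut-off factors $\chi_{(-\beta,\beta]}$ in $\mathcal C_\beta$ and $\chi_{E_\beta}$ in $\text{T}_{\beta}\text{S}$, so that the correspondence ``$L^1$-eigenfunction of $\mathcal P_\beta$ for $\lambda$ $\longleftrightarrow$ absolutely continuous $(U_\beta,\lambda)$-invariant measure on $(-{p},{p}]$'' is exact; and, for $0<\beta<{p}$, one needs the ergodicity of $|U_\beta|$ with respect to the appropriate infinite-mass measure in order for Proposition \ref{prop1} to apply in that range.
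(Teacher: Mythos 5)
Your reduction is exactly the paper's: factor $\text{I}-\text{T}_{\beta}\text{S}=(\text{I}+\mathcal C_\beta)(\text{I}-\mathcal C_\beta)$, pass to the predual, and reduce weak-star density of the range to the statement that $\pm1$ (indeed every unimodular $\lambda$) is not an eigenvalue of $\mathcal P_\beta$ on $L^1((-p,p])$; the translation of an eigenfunction $h$ into the absolutely continuous measure $d\nu=h\,dx$ satisfying $\lambda\nu=\sum_{j\in\mathbb Z^\ast}\nu_j$ is also what the paper does. For $\beta=p$ your appeal to Proposition \ref{prop1} is precisely the paper's Case 1 and is fine.

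The gap is in the range $0<\beta<p$, and it is the very point you flag at the end as ``the main obstacle.'' There is no counterpart of Proposition \ref{prop1} for $U_\beta$ ``obtained along the same lines'': the invariance of $d\omega=dx/(p^2-x^2)$ rests on the identity $\sum_{j\in\mathbb Z^\ast}\left((2pj-t)^2-p^2\right)^{-1}=(p^2-t^2)^{-1}$, which uses $\beta=p$ essentially; for $\beta<p$ the sum $\sum_j d\omega\left(p\beta/(2pj-t)\right)$ does not reproduce $d\omega(t)$, no natural $\sigma$-finite $|U_\beta|$-invariant measure is exhibited, and the ergodicity of $|U_\beta|$ is not established. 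The paper avoids ergodic theory entirely in this range and instead runs an escape-of-mass argument: taking total variations in $\lambda\nu=\sum_j\nu_j$ gives $\|\nu\|\leq\sum_j\|\nu_j\|=|\nu|((-\beta,\beta])\leq\|\nu\|$, and the forced equality shows $\nu$ vanishes on $(-p,p]\setminus(-\beta,\beta]$; iterating, $\nu$ is carried by $E_\beta(\infty)=\{x\in(-p,p] : U^{(k)}_\beta(x)\in(-\beta,\beta]~\text{for all}~k\geq0\}$, a $U_\beta$-invariant set of zero length, so the absolutely continuous $\nu$ is zero. You need to supply this (or some substitute) for $0<\beta<p$; as written, your argument establishes the proposition only when $\beta=p$.
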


\begin{proof}
In view of the identity (\ref{eq19}), we prove that the operators $\text{I}+\mathcal C_\beta$ and 
$\text{I}-\mathcal C_\beta$ have weak-star dense range. For $\lambda=\pm1,$ 
the weak-star closure of the range 
of $\lambda \text{I}-\mathcal C_\beta$ equals to $L^\infty((-{p},{p}])$ if and only if the predual adjoint 
\[\lambda \text{I}-\mathcal P_\beta : L^1((-{p},{p}])\rightarrow L^1((-{p},{p}])\] 
has null kernel. Therefore, it remains to show that $\pm1$ are not eigenvalues of $\mathcal P_\beta.$ 
\smallskip

We actually prove a stronger statement, namely that if $\lambda\in\mathbb C$ 
with $|\lambda|=1,$ then $\lambda$  is not an eigenvalue of $\mathcal P_\beta.$

\textit{Case 1.} Assume that $\beta=p$ and recall that $\mathcal C_p[\varphi](x)=\varphi\circ U(x)\chi_{(-p,p]}(x).$ 
By the dual relation, we get 
\[\int_{(-p,p]}\varphi(x)\mathcal P_p[\psi](x)dx=\int_{(-p,p]}\varphi\circ U(x)\psi(x)dx.\]
Suppose $\psi_0$ is a non-zero eigenfunction of $\mathcal P_p$ with eigenvalue 
$\lambda$ such that $|\lambda|=1,$ then $\mathcal P_p[\psi_0]=\lambda\psi_0$ and therefore 
\[\lambda\int_{(-p,p]}\varphi(x)\psi_0(x)dx=\int_{(-p,p]}\varphi(U(x))\psi_0(x)dx.\]
Hence a non-zero absolutely continuous finite Borel measure $d\nu(x)=\psi_0(x)dx$  
is $(U,\lambda)$-invariant which contradicts Proposition \ref{prop1}. Thus 
$|\lambda|=1$ is not an eigenvalue of $\mathcal P_p.$

\textit{Case 2.} Assume that $0<\beta<p.$ By dual relation we get 
\begin{equation}\label{eq11}
\lambda\int_{(-p,p]}\varphi(x)d\nu(x)=\int_{(-\beta,\beta]}\varphi(p\tau_\beta(x))d\nu(x)
\end{equation}
for all $\varphi\in L^\infty((-p,p]),$ where $\lambda\in\mathbb C$ with $|\lambda|=1$ and 
$\nu$ is a absolutely continuous finite Borel measure on $(-p,p].$  
It follows that \[\lambda d\nu(x)=\sum\limits_{j\in\mathbb Z^\ast}d\nu_j(x),~~x\in (-p,p],\] 
where $d\nu_j(x)=d\nu\left(\frac{\beta p}{2pj-x}\right).$ Taking 
absolute values and then integrating over $(-p,p],$ we have 
\[\int_{(-p,p]}|d\nu(x)|\leq\sum\limits_{j\in\mathbb Z^\ast}\int_{(-p,p]}|d\nu_j(x)|=
\int_{(-\beta,\beta]}|d\nu(x)|,~~x\in (-p,p],\] which is only possible if the following holds :  
\[|d\nu(x)|=\sum\limits_{j\in\mathbb Z^\ast}|d\nu_j(x)|,~~x\in (-p,p].\] From the above 
equality, it follows that 
\[d\nu(x)=0~\text{for}~x\in (-p,p]\setminus(-\beta,\beta].\]
Iterating the Equation \ref{eq11} and using the above arguments repeatedly, we conclude that 
\[d\nu(x)=0~\text{for}~x\in (-p,p]\setminus E_\beta(n),\] where 
$E_\beta(n)=\{x\in(-p,p] : ~U^{(k)}_\beta(x)\in(-\beta,\beta]~\text{for}~k=0,\cdots,n-1\}$ 
and $U^{(k)}_\beta$ the $k$-th iteration of $U_\beta.$
By letting $n\rightarrow +\infty$ we get that 
\[d\nu(x)=0~\text{for}~x\in (-p,p]\setminus E_\beta(\infty)\] with 
\[E_\beta(\infty)=\{x\in(-p,p] : ~U^{(k)}_\beta(x)\in(-\beta,\beta]~\text{for}~k=0,1,2,\cdots\}.\]
The set $E_\beta(\infty)$ is $U_\beta$ invariant and it has zero length.  
Hence the absolutely continuous measure $\nu$ vanishes almost everywhere on $(-p,p].$  
Thus we conclude that $|\lambda|=1$ is not an eigenvalue of $\mathcal P_\beta.$
\end{proof}

Finally, we show that for $0<\beta\leq{p},$ the sum space is weak-star 
dense in $L^\infty(\mathbb R).$

\begin{proof}[Proof of Theorem \ref{th5}] The proof is similar to the proof 
of (Lemma 5.2, \cite{HR}), hence omitted. This completes the proof of Theorem \ref{th5}.
\end{proof}

\noindent{\bf Acknowledgements.} The first named author gratefully acknowledges the support 
provided by IIT Kanpur, Government of India.

\bigskip


\end{document}